\documentclass[a4paper,12pt]{amsart}
\usepackage{amssymb}
\usepackage{ifthen}
\usepackage[dvips]{graphicx}
\usepackage{cite}
\nonstopmode \numberwithin{equation}{section}
\setlength{\textwidth}{15cm} \setlength{\oddsidemargin}{0cm}
\setlength{\evensidemargin}{0cm} \setlength{\footskip}{40pt}
\pagestyle{plain}

\usepackage{amssymb}
\usepackage{ifthen}
\usepackage{graphicx}
\usepackage{amsmath}
\usepackage[T1]{fontenc} 
\usepackage[utf8]{inputenc}
\usepackage[usenames,dvipsnames]{color}
\usepackage{color}
\usepackage[english]{babel}
\usepackage{fancyhdr}
\usepackage{fancybox}
\usepackage{tikz}

\nonstopmode \numberwithin{equation}{section}
\setlength{\textwidth}{15cm} \setlength{\oddsidemargin}{0cm}
\setlength{\evensidemargin}{0cm} \setlength{\footskip}{40pt}
\pagestyle{plain}

\theoremstyle{plain}

\newtheorem{prop}{Proposition}

\newtheorem{conj}{Conjecture}

\theoremstyle{definition}
\newtheorem{defn}{Definition}[section]

\newtheorem{thm}{Theorem}[section]
\newtheorem{cor}{Corollary}[section]
\newtheorem{lem}{Lemma}[section]
\newtheorem{prob}{Problem}
\newtheorem{rem}{Remark}[section]


\newcounter{minutes}\setcounter{minutes}{\time}
\divide\time by 60
\newcounter{hours}\setcounter{hours}{\time}
\multiply\time by 60
\addtocounter{minutes}{-\time}

\newcounter {own}
\def\theown {\thesection       .\arabic{own}}

\newenvironment{pf}[1][]{%
 \vskip 3mm
 \noindent
 \ifthenelse{\equal{#1}{}}%
  {{\slshape Proof. }}%
  {{\slshape #1.} }%
 }%
{\qed\bigskip}

\newcounter{alphabet}

\newcommand{\Aut}{{\operatorname{Aut}}}



\def\be{\begin{equation}}
\def\ee{\end{equation}}

\newcommand{\bee}{\begin{enumerate}}
\newcommand{\eee}{\end{enumerate}}

\newcommand{\blem}{\begin{lem}}
\newcommand{\elem}{\end{lem}}
\newcommand{\bthm}{\begin{thm}}
\newcommand{\ethm}{\end{thm}}
\newcommand{\bcor}{\begin{cor}}
\newcommand{\ecor}{\end{cor}}
\newcommand{\beg}{\begin{examp}}
\newcommand{\eeg}{\end{examp}}
\newcommand{\begs}{\begin{examples}}
\newcommand{\eegs}{\end{examples}}

\newcommand{\bdefn}{\begin{defn}}
\newcommand{\edefn}{\end{defn}}

\newcommand{\bprob}{\begin{prob}}
\newcommand{\eprob}{\end{prob}}
\newcommand{\bei}{\begin{itemize}}
\newcommand{\eei}{\end{itemize}}

\newcommand{\bcon}{\begin{conj}}
\newcommand{\econ}{\end{conj}}
\newcommand{\bcons}{\begin{conjs}}
\newcommand{\econs}{\end{conjs}}
\newcommand{\bprop}{\begin{prop}}
\newcommand{\eprop}{\end{prop}}
\newcommand{\br}{\begin{rem}}
\newcommand{\er}{\end{rem}}
\newcommand{\brs}{\begin{rems}}
\newcommand{\ers}{\end{rems}}
\newcommand{\bo}{\begin{obser}}
\newcommand{\eo}{\end{obser}}
\newcommand{\bos}{\begin{obsers}}
\newcommand{\eos}{\end{obsers}}
\newcommand{\bpf}{\begin{pf}}
\newcommand{\epf}{\end{pf}}
\newcommand{\ba}{\begin{array}}
\newcommand{\ea}{\end{array}}
\newcommand{\beq}{\begin{eqnarray}}
\newcommand{\beqq}{\begin{eqnarray*}}
\newcommand{\eeq}{\end{eqnarray}}
\newcommand{\eeqq}{\end{eqnarray*}}

\begin{document}

\title{Normality criterion for logharmonic mapping}

\author{Molla Basir Ahamed}
\address{Molla Basir Ahamed,
	Department of Mathematics,
	Jadavpur University,
	Kolkata-700032, West Bengal, India.}
\email{mbahamed.math@jadavpuruniversity.in}

\author{Sanju Mandal}
\address{Sanju Mandal,
	Department of Mathematics,
	Jadavpur University,
	Kolkata-700032, West Bengal, India.}
\email{sanjum.math.rs@jadavpuruniversity.in, sanju.math.rs@gmail.com}

\subjclass[{AMS} Subject Classification:]{Primary: 30C45; 30C55; 31A05}
\keywords{Meromorphic functions, Spherical derivatives, Normal functions, Harmonic mappings, Logharmonic mappings}

\def\thefootnote{}
\footnotetext{ {\tiny File:~\jobname.tex,
printed: \number\year-\number\month-\number\day,
          \thehours.\ifnum\theminutes<10{0}\fi\theminutes }
} \makeatletter\def\thefootnote{\@arabic\c@footnote}\makeatother

\begin{abstract}
In this paper, we present several necessary and sufficient conditions for a logharmonic mapping to be normal \textit{i.e.}, we establish Marty’s criterion, Zalcman–Pang lemma and the Lohwater–Pommerenke theorem for logharmonic mappings, along with an application of the Zalcman–Pang lemma.
\end{abstract}

\maketitle
\pagestyle{myheadings}
\markboth{M. B. Ahamed and S. Mandal}{Normality criterion for logharmonic mapping}

\section{\bf Introduction}
Let $\mathcal{A}(\mathbb{D})$ be the class of linear space of all analytic functions defined in the open unit disk $\mathbb{D} := \{z \in \mathbb{C} : |z| < 1\}$. A complex-valued function $f$ is said to be harmonic if both $\mathrm{Re}\{f\}$ and $\mathrm{Im}\{f\}$ are real harmonic. In other words, harmonic functions $f$ are the solutions of $\Delta f = 0$, where $\Delta$ is the Laplacian operator and is defined as 
\begin{align*}
	\Delta =4\frac{\partial^2}{\partial z\partial\overline{z}} =\frac{\partial^2}{\partial x^2} + \frac{\partial^2}{\partial y^2}.
\end{align*}
Every harmonic mapping f has crucial property, that is, it has canonical decomposition $f=h+\overline{g}$, where $h,g\in\mathcal{A} (\mathbb{D})$ and respectively known as analytic and co-analytic
part of $f$, this representation is unique with the condition $g(0)=0$. In $ 1936 $, Lewy \cite{Lewy-BAMS-1936} obtained a necessary and sufficient condition for a complex-valued harmonic mapping $f=h+\overline{g} $ is locally univalent and sense preserving in $ \mathbb{D} $ is that the $ h^{\prime}(z)\neq 0 $ and the Jacobian $ J_f(z) $ is positive in $ \mathbb{D} $, where $ J_f(z)= |h^{\prime}(z)|^2 - |g^{\prime}(z)|^2 $. Denote $\mathcal{H} (\mathbb{D})$, the class of all complex-valued harmonic mappings defined in $\mathbb{D}$.\vspace{1.5mm}

A mapping $f$ defined in $\mathbb{D}$ is logharmonic if $\log(f)\in \mathcal{H}(\mathbb{D})$. Alternatively, the logharmonic mappings are the solutions of the non-linear elliptic partial differential equation
\begin{align}\label{eq-1.1}
	\frac{\overline{f}_{\overline{z}}}{\overline{f}}=\omega \frac{f_z}{f},
\end{align}
where $\omega\in\mathcal{A}(\mathbb{D})$ with the condition $|\omega|<1$ and is called the second dilatation of $f$. Note that if $f_1$ and $f_2$ are two logharmonic functions with respect to $\omega$, then $f_1 f_2$ is logharmonic with respect to same $\omega$, and $f_1/f_2$ is logharmonic (provided $f_2\neq 0$). The logharmonicity preserves pre-composition with a conformal mapping, whereas it is not always true for a post-composition. Furthermore, logharmonicity is not invariant under translation and inversion. Every non-constant logharmonic mapping is quasi-regular, therefore it
is continuous and open. The logharmonic mapping $f$ is also sense preserving as its Jacobian $J_f(z)= |f_z(z)|^2 - |f_{\overline{z}}(z)| ^2$ is positive. The salient properties like the modified Liouville’s theorem, the maximum principle, the identity principle and the argument principle hold true for logharmonic mappings \cite{Abdulhadi-Ali-AAA-2012,Meng-Ponnusamy-Qiao-JGA-2024}. \vspace{1.5mm}

It has been shown that if $f$ is a non-vanishing logharmonic mapping then $f$ admits the following representation
\begin{align}\label{eq-1.2}
	f(z)=h(z)\overline{g(z)},
\end{align}
where $h(z), g(z)$ are analytic in the unit disk $\mathbb{D}$ with $h(0)\neq 0$, $g(0)=1$ \cite{Abdulhadi-Bshouty-TAMS-1988}. If $f$ vanishes at $z =0$ but it is not identically zero, then $f$ can be represented by
\begin{align}\label{eq-1.3}
	f(z)=z|z|^{2\beta}h(z)\overline{g(z)},
\end{align}
where $\mathrm{Re}\{\beta\}>- \frac{1}{2}$, $h(z)$ and $g(z)$ are analytic in the unit disk $\mathbb{D}$ with $h(0)\neq 0$ and $g(0)=1$. The class of sense-preserving logharmonic mappings is denoted by $S_{LH}$. \vspace{1.5mm}

Let $f(1)=1$, and $\mathrm{Re}\{\beta\}>-\frac{1}{2}$, then the function $f_{\beta}(z)=z|z|^{2\beta}$ is a solution of the equation \eqref{eq-1.1} in $\mathbb{C}$ with $\omega=\frac{\overline{\beta}} {1+\beta}$. It is a simple way that $f$ maps the unit disk $\mathbb{D}$ onto itself, since $\omega\in\mathcal{A}(\mathbb{D})$ and $|\omega|<1$, the Jacobian
\begin{align*}
	J_f(z)= |f_z(z)|^2\left(1-|\omega(z)|^2\right)
\end{align*}
is positive and all non-constant logharmonic mappings are sense-preserving in the open unit disk $\mathbb{D}$. We remark that the composition $f\circ\varphi$ of a logharmonic mapping $f$ with an analytic mapping $\varphi$ is also logharmonic. However, the composition $\varphi\circ f$ of an analytic mapping $\varphi$ with
a logharmonic mapping $f$ is in general not logharmonic. The class of univalent logharmonic mappings has been investigated extensively by many works \cite{Abdulhadi-Bshouty-TAMS-1988,Abdulhadi-IJM-1996, Abdulhadi-IJM-2002}. \vspace{1.5mm}

A function $ f $ meromorphic in $ \mathbb{D} $ is called a normal function if the family $\mathfrak{F}=\{f\circ\phi: \phi\in \Aut(\mathbb{D})\}$ is a normal family, where $\Aut(\mathbb{D})$ denotes the class of conformal automorphisms of $\mathbb{D}$. Normal functions play important roles in studying the properties of meromorphic functions, especially their behavior at the boundary.
\begin{defn}\cite{Noshiro-JFSHU-1938,Yosida-PPMSJS-1934}
A meromorphic function $f$ is said to normal if, and only if, 
\begin{equation}\label{eq-1.4}
	\sup_{z\in\mathbb{D}}\left(1-|z|^2\right)f^{\#}(z)<\infty
\end{equation} 
where $ f^{\#} $ denotes the spherical derivative of $ f $ given by $f^{\#}(z)= {|f^{\prime}(z)|}/{(1+|f(z)|^2)}$.
\end{defn}
The condition \eqref{eq-1.4} is equivalent to say that $f$ is Lipschitz when considered as a mapping from the hyperbolic disc into the extended complex plane $\hat{\mathbb{C}}:=\mathbb{C}\cup \{\infty\}$ with the chordal distance. The normal meromorphic functions include two important examples that univalent meromorphic functions and analytic functions which omit two values. Their importance were shown in geometric functions theory and the behaviour
in the boundary of meromorphic functions in the unit disk $\mathbb{D}$. Many scholars have studied the properties of normal meromorphic functions from both analytic and geometric points of view \cite{Yamashita-MZ-1975,Pommerenke-MMJ-1974,Clunie-Anderson-Pommerenke-JRM-1974}. \vspace{1.5mm}

The real valued harmonic functions were defined in $\mathbb{D}$, in [14] Lappan established that $\varphi$ is normal if
\begin{align*}
	\sup_{z\in\mathbb{D}}(1-|z|^2) \frac{|grad\;\varphi(z)|}{1+ |\varphi(z)|^2}<\infty,
\end{align*}
where $grad\;\varphi(z)$ is the gradient vector of $\varphi$. He showed that if $\varphi$ is a harmonic normal function, and if $f=\varphi +i\phi$ is analytic in $\mathbb{D}$, then $f$ is normal function. Since the topic of harmonic mappings of complex value is one of the most studied in the domain of complex analysis nowadays. It seems natural to address the issue of normal harmonic mappings of complex value defined in the unit disk, we remark that an important concept related with normal harmonic functions is the Bloch function, which was studied by Colonna in \cite{Colonna-IUMJ-1989}. Now, we recall here the definition of normal harmonic mapping as follows.
\begin{defn}\cite{Arbe-Hern-MM-2019,Deng-Ponn-Qiao-MM-2020}
A harmonic mapping $ f=h+\overline{g} $ in $ \mathbb{D} $ is said to be normal if $\sup_{z\in\mathbb{D}}\left(1-|z|^2\right)f^{\#}(z)< \infty$, where 
\begin{align*}
	f^{\#}(z) = \frac{|h^{\prime}(z)|+ |g^{\prime}(z)|}{1 + |f(z)|^2}.
\end{align*} 
\end{defn}
Following the idea of Colonna \cite{Colonna-IUMJ-1989} on harmonic Bloch functions, Arbeláez \textit{et al.} \cite{Arbe-Hern-MM-2019} studied normal harmonic mappings and established some necessary conditions for a harmonic mapping to be normal. Moreover, in \cite{Deng-Ponn-Qiao-MM-2020,Ahamed-Mandal-MM-2023,Arbe-Hern-MM-2019}, the authors have studied some properties of normal harmonic mappings. \vspace{1.5mm}

Planar logharmonic mappings constitute an important tool in the study of harmonic mappings from the point of view of geometric function theory. In $2020$, Jiang \cite{Jiang-HJM-2020} studied the normality criterian of logharmonic mappings and prove that a logharmonic mapping $f$ defined in the unit disk $\mathbb{D}$ is normal if it satisfies a Lipschitz type condition. Here, we bring to attention the definition of a normal logharmonic mapping as follows.
\begin{defn}\cite{Jiang-HJM-2020}
A logharmonic mapping $f(z)=z|z|^{2\beta}h(z)\overline{g(z)}$ in the unit disk $\mathbb{D}$ is said to be normal if and only if $\sup_{z\in\mathbb{D}}(1-|z|^2) f^{\#}(z)<\infty$, where
\begin{align*}
	f^{\#}(z)=\frac{|f_z(z)|+|f_{\overline{z}}(z)|}{1+|f(z)|^2}.
\end{align*}
\end{defn}
We acquaint with the chordal distance on the generalized complex plane $\hat{\mathbb{C}}$ which is defined as follows: The \textit{chordal distance} $\chi(z_1, z_2)$ between the complex values $z_1$ and $z_2$, considered
as points on the Riemann sphere, is given by
\begin{align}\label{eq-1.5}
	\chi(z_1, z_2):=
	\begin{cases}
		0\;\;\;\;\;\;\;\;\;\;\;\;\;\;\;\;\;\;\;\;\;\;\;\;\;\;\;\;\;\;\;\;\; \mbox{if}\; z_1=z_2,\\
		\dfrac{|z_1-z_2|}{\sqrt{1+|z_1|^2}\sqrt{1+|z_2|^2}} \;\;\mbox{if}\; z_1\neq \infty\neq z_2\\
		\dfrac{1}{\sqrt{1+|z_1|^2}} \;\;\;\;\;\;\;\;\;\;\;\; \;\;\;\;\;\; \mbox{if}\; z_1\neq \infty =z_2
	\end{cases}
\end{align}
If $P_{z_1}, P_{z_2}$ are the two points on the Riemann sphere, under stereographic projection, corresponding to $z_1$ and $z_2$ respectively, we have
\begin{align*}
	|P_{z_1} -P_{z_2}|=\chi(z_1, z_2).
\end{align*}
Hence
\begin{align*}
	\chi(z_1, z_2)\leq \varrho(z_1,z_2)\leq L(\Gamma),
\end{align*}
where $\varrho(z_1,z_2)$ is the spherical distance of $z_1$ and $z_2$, $\Gamma$ is any rectifiable curve in $\mathbb{C}$ with endpoints $z_1,z_2$, and
\begin{align*}
	L(\Gamma)=\int_{\Gamma} \frac{|du|}{1+|u|^2}
\end{align*}
is the spherical length of $\Gamma$. \vspace{1.5mm}

A logharmonic mapping $f:\mathbb{D}\rightarrow\mathbb{C}$ is called a normal logharmonic mapping, if
\begin{align*}
	\sup_{z_1\neq z_2} \frac{\chi(f(z_1), f(z_2))}{\rho(z_1,z_2)} < \infty,
\end{align*}
where $\rho(z_1,z_2)$ denotes the hyperbolic distance between two points $z_1$ and $z_2$ in $\mathbb{D}$, that is,
\begin{align*}
	\rho(z_1,z_2)=\frac{1}{2}\log\left(\frac{1+r}{1-r}\right), \;\;\;\;
	\mbox{where}\;\;\;\; r=\vline\frac{z_1 -z_2}{1-\bar{z_1}z_2}\vline.
\end{align*}

In this paper, we present a range of significant results that not only build upon earlier studies but also introduce new ideas that, to the best of our knowledge, are novel. Our main results provide necessary and sufficient conditions for a logharmonic mapping to be normal. In the next section, we first establish Marty’s criterion for logharmonic mappings, followed by a detailed discussion of the Zalcman–Pang lemma and the Lohwater–Pommerenke theorem for logharmonic mappings, along with an application of the Zalcman–Pang lemma.

\section{\bf Main results}
 We begin this section with the following Theorem which is a generalization of the corresponding one for analytic functions due to Marty \cite[p. 226, Theorem 17]{Ahlfors-1979}.
\begin{thm}\label{th-2.1}
A class $\mathcal{F}$ of logharmonic mapping $f(z)=z|z|^{2\beta}h(z) \overline{g(z)}$ in the unit disk $\mathbb{D}$ is normal if $\{f^{\#}(z):f\in\mathcal{F}\}$ uniformly locally bounded. (where $f^{\#}$ is defined in Definition 1.1)
\end{thm}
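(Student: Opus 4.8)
The plan is to reproduce the classical argument behind Marty's criterion, with the complex derivative $|f'|$ replaced throughout by the quantity $|f_z|+|f_{\bar z}|$ and with the chordal metric on the target. Concretely, the proof has three steps: (i) upgrade the hypothesized local bound on $f^{\#}$ to a local Lipschitz bound for $f$ measured in the chordal metric, using the comparison $\chi\le\varrho\le L(\Gamma)$ recorded after \eqref{eq-1.5}; (ii) deduce that $\mathcal F$ is (locally) equicontinuous as a family of maps from $\mathbb D$ into $(\widehat{\mathbb C},\chi)$; (iii) invoke the Arzelà--Ascoli theorem for maps into the compact metric space $(\widehat{\mathbb C},\chi)$ to extract, from any sequence in $\mathcal F$, a subsequence converging uniformly on compact subsets of $\mathbb D$ with respect to $\chi$, which is exactly normality of $\mathcal F$.

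For step (i), fix $z_0\in\mathbb D$ and a closed disc $\overline{D(z_0,r)}\subset\mathbb D$, and put $M:=\sup\{f^{\#}(z):z\in\overline{D(z_0,r)},\ f\in\mathcal F\}$, which is finite by the uniform local boundedness hypothesis. Given $z_1,z_2\in D(z_0,r)$, join them by the segment $\Gamma=[z_1,z_2]$, which lies in $\overline{D(z_0,r)}$ by convexity and has Euclidean length $|z_1-z_2|$. Since every non-constant logharmonic mapping is $C^1$ (indeed real-analytic) away from its zero set, $f\circ\Gamma$ is a rectifiable curve joining $f(z_1)$ and $f(z_2)$, and along $\Gamma$ one has $|df|=|f_z\,du+f_{\bar z}\,d\bar u|\le(|f_z|+|f_{\bar z}|)\,|du|$. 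Hence, using $\chi(f(z_1),f(z_2))\le\varrho(f(z_1),f(z_2))\le L(f\circ\Gamma)$,
\[
\chi(f(z_1),f(z_2))\ \le\ \int_{\Gamma}\frac{|f_z(u)|+|f_{\bar z}(u)|}{1+|f(u)|^2}\,|du|\ =\ \int_{\Gamma}f^{\#}(u)\,|du|\ \le\ M\,|z_1-z_2|.
\]
Step (ii) is then immediate: the last display says $\mathcal F$ is uniformly Lipschitz on $D(z_0,r)$ in the chordal metric, and letting $z_0$ vary over $\mathbb D$ yields local equicontinuity on all of $\mathbb D$. For step (iii), compactness of $(\widehat{\mathbb C},\chi)$ makes $\{f(z):f\in\mathcal F\}$ relatively compact for each $z\in\mathbb D$, so Arzelà--Ascoli applies and gives the desired normal-family conclusion.

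The one genuinely delicate point is the length identity in step (i) at the exceptional point $z=0$, where a logharmonic mapping $z|z|^{2\beta}h(z)\overline{g(z)}$ may vanish and need not be $C^1$ (its partial derivatives can even be unbounded when $\operatorname{Re}\beta<0$). This is absorbed by the hypothesis, which asserts local boundedness of $f^{\#}$ on a neighbourhood of $0$ as well; alternatively one chooses the connecting segment to avoid the single point $0$ and passes to the limit using continuity of $f$, so the estimate $\chi(f(z_1),f(z_2))\le\int_\Gamma f^{\#}\,|du|$ is preserved. I would also note explicitly that this argument establishes normality only in the sense of the chordal-metric Arzelà--Ascoli property and does not claim that a subsequential limit is again logharmonic, which is not part of the statement.
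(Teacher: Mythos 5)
Your proposal is correct and follows essentially the same route as the paper: bound the chordal distance $\chi(f(z_1),f(z_2))$ by the spherical length of the image of a connecting segment, estimate that length by $\int_\Gamma f^{\#}\,|dz|\le M|z_1-z_2|$ using the uniform local bound, conclude equicontinuity, and finish with Arzel\`a--Ascoli. Your additional remarks (care at $z=0$ and compactness of $(\widehat{\mathbb C},\chi)$ for pointwise relative compactness) are refinements of the same argument, which the paper states more tersely.
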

\begin{proof}
Consider $\chi(f(z_1), f(z_2))$ defined as in \eqref{eq-1.5} for $f(z_1)\neq\infty\neq f(z_2)$. It is easy to see that, followed by the stereographic projection, $f$ maps an arc $\gamma$ on an
image with length
\begin{align*}
	L(\Gamma)=\int_{\Gamma} \frac{|df(z)|}{1+|f(z)|^2} \leq \int_{\Gamma} \frac{\left(|f_z(z)|+ |f_{\overline{z}}(z)| \right)|dz|} {1+|f(z)|^2}=\int_{\Gamma} f^{\#}(z) |dz|.
\end{align*}
Assume that $\{f^{\#}(z):f\in\mathcal{F}\}$ uniformly locally bounded. So, there exists $M>0$ such that $f^{\#}(z)\leq M$ on the segment between $z_1$ and $z_2$, where $M$ is independent of $f$, then we have
\begin{align*}
	\chi(f(z_1), f(z_2))\leq L(\Gamma)\leq \int_{\Gamma}f^{\#}(z) |dz|
	\leq M\int_{\Gamma} |dz|=M|z_1 -z_2|,
\end{align*}
which implies that logharmonic mappings in $\mathcal{F}$ are equicontinuous when $f^{\#}(z)$’s are locally bounded. By Arzelà-Ascoli theorem \cite[p. 222, Theorem 14]{Ahlfors-1979}, the class $\mathcal{F}$ is normal.
\end{proof}

Now, we establish a necessary and sufficient condition for a logharmonic mapping in  $\mathbb{D}$ to be normal, which serves as the logharmonic counterpart to the well-known Zalcman–Pang lemma \cite{Hua-MM-1995,Pang-Zalcman-BLMS-2000}.
\begin{thm}\label{th-2.2}
A non-constant function $f(z)=z|z|^{2\beta}h(z) \overline{g(z)}$ logharmonic mapping in $\mathbb{D}$ is normal if and only if for each $\alpha\in(-1,\infty)$, there do not exist sequences of points $\{z_n\}\subset\mathbb{D}$ and of real numbers $\{\rho_n\}$ with $\rho_n>0$ and $\rho_n\rightarrow 0$ as $n\rightarrow\infty$ such that the functions 
\begin{align*}
	F_n(\xi):=\rho^{\alpha}_n f(z_n+\rho_n\xi)
\end{align*}
converges locally uniformly in $\mathbb{C}$ to a non-constant logharmonic mapping $F(\xi)$ satisfying $F^{\#}(\xi)\leq F^{\#}(0)= 1$.
\end{thm}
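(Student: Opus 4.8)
The plan is to follow the classical Zalcman–Pang strategy, adapted to the logharmonic setting, where the obstruction to normality is the failure of $(1-|z|^2)f^{\#}(z)$ to be bounded on $\mathbb{D}$. For the \emph{if} direction I would argue by contraposition: assume $f$ is not normal, so there is a sequence $\{w_n\}\subset\mathbb{D}$ with $(1-|w_n|^2)f^{\#}(w_n)\to\infty$. The goal is to run a rescaling argument at these points. Writing $M_n:=\sup_{|z-w_n|\le 1-|w_n|}(1-|w_n|^2)f^{\#}(z)$ and choosing $z_n$ near $w_n$ where this sup is essentially attained (a standard maximal-point selection; e.g. pick $z_n$ with $(1-|z_n|^2)f^{\#}(z_n)$ within a factor of the sup over the relevant subdisk), set $\rho_n:=1/f^{\#}(z_n)\to 0$. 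One then defines $F_n(\xi):=\rho_n^{\alpha}f(z_n+\rho_n\xi)$ for a fixed $\alpha\in(-1,\infty)$; here the exponent $\alpha$ is forced by the homogeneity type $f(z)=z|z|^{2\beta}h(z)\overline{g(z)}$ — in the purely analytic Pang lemma $\alpha\in[-1,1]$ records the scaling weight, and in our case the admissible range comes from the constraint $\mathrm{Re}\,\beta>-\tfrac12$, so one checks that $\rho_n^{\alpha}$ is exactly the factor making $F_n^{\#}(0)$ converge to $1$. The key normalization is that $F_n^{\#}(0)=\rho_n f^{\#}(z_n)=1$ and, by the maximal choice of $z_n$ together with $(1-|w_n|^2)f^{\#}(w_n)\to\infty$, one gets $F_n^{\#}(\xi)\le 1+o(1)$ uniformly on compact subsets of $\mathbb{C}$. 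By Theorem~\ref{th-2.1} (Marty's criterion for logharmonic mappings), the family $\{F_n\}$ is then normal on every disk $\{|\xi|<R\}$, so a diagonal subsequence converges locally uniformly on $\mathbb{C}$ to a logharmonic limit $F$; the limit is logharmonic because logharmonicity (the PDE \eqref{eq-1.1}, or equivalently the representation, together with $\omega$ analytic with $|\omega|<1$) is preserved under locally uniform limits of sense-preserving solutions with a fixed dilatation — one should note the dilatations $\omega_n$ of $F_n$ are normalized versions of $\omega$ and pass to a limit. Then $F^{\#}(\xi)\le 1=F^{\#}(0)$ by continuity of the spherical derivative under this convergence, and $F$ is non-constant since $F^{\#}(0)=1\ne 0$. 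This contradicts the hypothesis, proving the \emph{if} direction.

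For the \emph{only if} direction, suppose such sequences $\{z_n\}$, $\{\rho_n\}$ exist with $F_n(\xi)=\rho_n^{\alpha}f(z_n+\rho_n\xi)\to F$ non-constant logharmonic and $F^{\#}(\xi)\le F^{\#}(0)=1$; I want to show $f$ is not normal. Pick $\xi_0$ with $F^{\#}(\xi_0)>0$ (possible since $F$ is non-constant, by the identity principle for logharmonic mappings). Then $f^{\#}(z_n+\rho_n\xi_0)=\rho_n^{-\alpha-?}\,F_n^{\#}(\xi_0)$ — more precisely, using $F_n^{\#}(\xi)=\rho_n\,f^{\#}(z_n+\rho_n\xi)$ one has $f^{\#}(z_n+\rho_n\xi_0)=\rho_n^{-1}F_n^{\#}(\xi_0)\to\infty$ at the rate $1/\rho_n$, while the points $\zeta_n:=z_n+\rho_n\xi_0$ still lie in $\mathbb{D}$ and $1-|\zeta_n|^2\ge 1-|z_n|^2 - O(\rho_n)$. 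If $\{z_n\}$ stays in a compact subset of $\mathbb{D}$ then $1-|\zeta_n|^2$ is bounded below, so $(1-|\zeta_n|^2)f^{\#}(\zeta_n)\to\infty$ immediately. If $|z_n|\to 1$, one must control the rate: here one uses that $\rho_n\to0$ together with the scaling to see $(1-|z_n|^2)/\rho_n\to\infty$ is \emph{not} automatic, so the honest argument is the standard one of composing with automorphisms — apply the criterion to $f\circ\phi_n$ for $\phi_n\in\Aut(\mathbb{D})$ sending $0$ to $z_n$, reducing to the compact case, which is exactly why the definition of normality via $\mathfrak{F}=\{f\circ\phi\}$ and the hyperbolic formulation given in the introduction are equivalent. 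In either case $\sup_{z}(1-|z|^2)f^{\#}(z)=\infty$, so $f$ is not normal.

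The main obstacle I anticipate is twofold. First, the correct bookkeeping of the exponent $\alpha$ and the scaling weight: for the analytic Zalcman–Pang lemma the rescaling is $f(z_n+\rho_n\xi)$ or $\rho_n^{-1}f(z_n+\rho_n\xi)$ depending on whether one targets the spherical derivative of $f$ or of a "$k$-th" object, and because $f(z)=z|z|^{2\beta}h(z)\overline{g(z)}$ carries a non-integer homogeneity $1+2\mathrm{Re}\,\beta$ one has to verify that $\rho_n^{\alpha}f(z_n+\rho_n\xi)$ with $\alpha\in(-1,\infty)$ has $F_n^{\#}(0)\to 1$ after the right choice of $\rho_n$, and that the limit $F$ inherits a sensible logharmonic structure rather than degenerating — in particular one must rule out $F\equiv$ const or $F$ having no finite spherical derivative. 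Second, and more delicate, is proving that the locally uniform limit of logharmonic mappings is again logharmonic with controlled dilatation: unlike the harmonic case, logharmonicity is a nonlinear condition, so one needs to argue via $\log F_n$ (on subdomains where $F_n\ne 0$) being harmonic with uniformly bounded dilatation $\omega_n$, extract $\omega_n\to\omega_\infty$ analytic with $|\omega_\infty|\le 1$, apply the maximum principle to conclude $|\omega_\infty|<1$ unless $\omega_\infty$ is a unimodular constant, and handle the zeros of $F$ using the representation \eqref{eq-1.3} with the $z|z|^{2\beta}$ factor — this is where the $\mathrm{Re}\,\beta>-\tfrac12$ hypothesis and the quasi-regularity (hence normal-family compactness) of logharmonic mappings do the real work. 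The rest is routine once these two points are secured.
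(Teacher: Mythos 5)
Your plan hits the statement's shape but misses the one idea the theorem actually requires: the Pang-type normalization for $\alpha\neq 0$. If $F_n(\xi)=\rho_n^{\alpha}f(z_n+\rho_n\xi)$, then
\begin{align*}
F_n^{\#}(\xi)=\frac{\rho_n^{1+\alpha}\left(|f_z(z_n+\rho_n\xi)|+|f_{\overline{z}}(z_n+\rho_n\xi)|\right)}{1+\rho_n^{2\alpha}\,|f(z_n+\rho_n\xi)|^{2}},
\end{align*}
not $\rho_n f^{\#}(z_n+\rho_n\xi)$. So with your choice $\rho_n=1/f^{\#}(z_n)$ one gets $F_n^{\#}(0)=\rho_n^{\alpha}\bigl(1+|f(z_n)|^{2}\bigr)/\bigl(1+\rho_n^{2\alpha}|f(z_n)|^{2}\bigr)$, which is neither $1$ nor bounded in general, and a maximal-point selection based on $f^{\#}$ alone gives no control of $F_n^{\#}(\xi)$ because of the weight $\rho_n^{2\alpha}|f|^{2}$ in the denominator. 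This is exactly what the paper's proof is built to handle: it introduces the two-parameter functional $H_n(t,z)$ (the spherical-derivative expression with the weight $\bigl(1-|z|^2/r_n^2\bigr)t$ raised to the powers $1+\alpha$ and $2\alpha$), shows $\sup_{|z|<r_n}H_n(t,z)<1$ for small $t$ while $H_n(1,z_n^{*})\to\infty$, selects $(t_n,z_n)$ with $\sup_{|z|<r_n}H_n(t_n,z)=H_n(t_n,z_n)=1$, and only then sets $\rho_n=\bigl(1-|z_n|^2/r_n^2\bigr)t_n$; this simultaneously gives $F_n^{\#}(0)=H_n(t_n,z_n)=1$ and, via the maximality of $H_n(t_n,\cdot)$ at $z_n$, the bound $F_n^{\#}(\xi)\le(1+o(1))\,H_n(t_n,z_n+\rho_n\xi)\le 1+o(1)$, after which Theorem \ref{th-2.1} applies. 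You flag the ``bookkeeping of $\alpha$'' as an anticipated obstacle, but supplying this mechanism is the content of the Zalcman--Pang lemma, so the proposal is missing the key step, not a detail. (Also, $\alpha$ is a free parameter ranging over $(-1,\infty)$; it is not ``forced by'' $\beta$ or the factor $z|z|^{2\beta}$.) Even in the case $\alpha=0$ your selection is shaky: you maximize $f^{\#}$ over $\{|z-w_n|\le 1-|w_n|\}$ with the constant weight $1-|w_n|^2$, so the rescaled points $z_n+\rho_n\xi$ may leave that disk where nothing bounds $f^{\#}$; the standard cure is the vanishing weight $\bigl(1-|z|^2/r_n^2\bigr)$, which is how the paper's Theorem 2.3 and the $H_n$ above are arranged.

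The ``only if'' half of your proposal rests on the same incorrect identity $F_n^{\#}(\xi)=\rho_n f^{\#}(z_n+\rho_n\xi)$, and the suggested rescue by composing with automorphisms is not carried out: replacing $f$ by $f\circ\phi_n$ changes the rescaled functions, and no relation between $F_n$ and rescalings of $f\circ\phi_n$ is established, so the reduction ``to the compact case'' is only named, not proved. The paper argues directly: assuming $f$ normal, it bounds $F_n^{\#}(\xi)$ above by $\dfrac{\rho_n^{1+\alpha}}{1-|z_n|-\rho_n|\xi|}\bigl(1-|z_n+\rho_n\xi|^2\bigr)f^{\#}(z_n+\rho_n\xi)$ and uses boundedness of $(1-|z|^2)f^{\#}(z)$ to force $F_n^{\#}\to 0$, hence $F$ constant, contradicting $F^{\#}(0)=1$. (Even there one must compare $1+\rho_n^{2\alpha}|f|^2$ with $1+|f|^2$ when $\alpha>0$, a point the paper itself passes over quickly; but at minimum your write-up needs the correct spherical derivative of the rescaled maps and an argument valid for all $\alpha\in(-1,\infty)$, not just $\alpha=0$.) Your concerns about the logharmonicity of the limit are legitimate but secondary; the decisive gap is the absence of the $H_n(t,z)$ selection.
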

\begin{proof}
Suppose that $f$ is not normal. Then there exists a sequence $\{z^*_n\}\subset\mathbb{D}$ with $z^*_n\rightarrow 1$ as $n\rightarrow\infty$ such that 
\begin{align*}
	\left(1-|z^*_n|^2\right)\frac{|f_z(z^*_n)|+|f_{\overline{z}}(z^*_n)|}{1+|f(z^*_n)|^2}\rightarrow\infty \;\;\;\; \mbox{as}\;\;\;\;n\rightarrow\infty.
\end{align*}
Let $\{r_n\}$ be a sequence such that $|z^*_n|<r_n<1$ and
\begin{align}\label{eq-2.1}
	\left(1-\frac{|z^*_n|^2}{r^2_n}\right)\frac{|f_z(z^*_n)|+|f_{\overline{z}}(z^*_n)|}{1+|f(z^*_n)|^2}\rightarrow\infty \;\;\;\; \mbox{as}\;\;\;\;n\rightarrow\infty.
\end{align}
Now, we defined
\begin{align*}
	H_n(t,z):=\frac{\left\{\left(1-\frac{|z^*_n|^2}{r^2_n}\right)t\right\}^{1+\alpha}\left(|f_z(z)|+|f_{\overline{z}}(z)|\right)}{1 + \left\{\left(1-\frac{|z^*_n|^2}{r^2_n}\right)t\right\}^{2\alpha} |f(z)|^2},
\end{align*}
where $0<t\leq 1$ and $|z|<r_n$. Clearly, the functions $H_n(t,z)$ are continuous in $(0,1]\times\{|z|<r_n\}$. As $\alpha>-1$ and $f$ is logharmonic, it follows that
\begin{align}\label{eq-2.2}
	\lim_{t\rightarrow 0} H_n(t,z)=0.
\end{align}
On the other hand, it is easy to verify that
\begin{align}\label{eq-2.3}
	H_n(t,z)\geq t^{1+|\alpha|}\left(1-\frac{|z|^2}{r^2_n} \right)^{1+|\alpha|}  f^{\#}(z).
\end{align}
By \eqref{eq-2.1} and \eqref{eq-2.3}, we see that
\begin{align}\label{eq-2.4}
	H_n(1,z^*_n)\geq \left(1-\frac{|z^*_n|^2}{r^2_n} \right)^{1 +|\alpha|} f^{\#}(z)\rightarrow\infty \;\;\;\;\mbox{as} \;\;\;\;n\rightarrow \infty.
\end{align}
Therefore, from \eqref{eq-2.1}, \eqref{eq-2.2} and \eqref{eq-2.3}, we deduce that
\begin{align*}
	\sup_{|z|<r_n} H_n(1,z)\geq H_n(1,z^*_n)>1,\;\;\;\;\mbox{for sufficiently large} \;\; n.
\end{align*}
and
\begin{align*}
	\sup_{|z|<r_n} H_n(t,z)< 1,\;\;\;\;\mbox{for sufficiently small} \;\; t.
\end{align*}
Thus, there exists $t_n$ and $z_n$ with $0<t_n\leq 1$ and $|z_n|<r_n$ such that
\begin{align}\label{eq-2.5}
	\sup_{|z|<r_n} H_n(t_n,z)= H_n(t_n,z_n)=1.
\end{align}
Combining this with \eqref{eq-2.3} and \eqref{eq-2.5}, we obtain
\begin{align*}
	1=H_n(t_n,z_n)\geq H_n(t_n,z^*_n)\geq t^{1+|\alpha|}_n\left(1- \frac{|z^*_n|^2}{r^2_n}\right)^{1+|\alpha|}  f^{\#}(z^*_n)> t^{1+|\alpha|}_n H_n(1,z^*_n).
\end{align*}
Since the second factor $H_n(1,z^*_n)$ on the right-hand side tends to infinity by \eqref{eq-2.4}, it follows that $\lim_{n\rightarrow\infty} t_n=0$.\vspace{2mm}

Now set $\rho_n:=\left(1- \frac{|z_n|^2}{r^2_n}\right)t_n$, so that
\begin{align}\label{eq-2.6}
	\frac{\rho_n}{1-|z_n|}\leq \frac{\rho_n}{r_n -|z_n|}= \frac{(r^2_n -|z_n|^2)}{r^2_n(r_n -|z_n|)}t_n= \frac{(r_n +|z_n|)}{r^2_n} t_n \rightarrow 0 \;\;\;\;\mbox{as} \;\;\;\;n\rightarrow \infty.
\end{align}
Then the functions $F_n(\xi)=\rho^{\alpha}_n f(z_n+\rho_n\xi)$ is defined for $|\xi|<R_n=\frac{1-|z_n|}{\rho_n}$. Moreover, a simple computation yields
\begin{align}\label{eq-2.7}
	F^{\#}_n(\xi)= \frac{\rho^{1+\alpha}_n\left(|f_z(z_n+\rho_n\xi)| + |f_{\overline{z}}(z_n+\rho_n\xi)|\right)}{1+\rho^{2\alpha}_n |f(z_n+\rho_n\xi)|^2}.
\end{align}
In view of \eqref{eq-2.5} and \eqref{eq-2.7}, we obtain
\begin{align}\label{eq-2.8}
	F^{\#}_n(0) \nonumber&= \frac{\rho^{1+\alpha}_n\left(|f_z(z_n)| + |f_{\overline{z}}(z_n)|\right)}{1+\rho^{2\alpha}_n |f(z_n)|^2}\\&= \frac{\left\{\left(1- \frac{|z_n|^2}{r^2_n}\right)t_n \right\} ^{1+\alpha}\left(|f_z(z_n)| + |f_{\overline{z}}(z_n)|\right)} {1+\left\{\left(1- \frac{|z_n|^2}{r^2_n}\right)t_n \right\}^{2\alpha} |f(z_n)|^2} = H_n(t_n,z_n)=1.
\end{align}
It follows from \eqref{eq-2.6} that $\left(1- \frac{|z_n|^2}{r^2_n} \right)/\left(1- \frac{|z_n+ \rho_n\xi|^2}{r^2_n}\right)$ tends uniformly to $1$ as $n\rightarrow\infty$ on compact subsets of $\mathbb{C}$. Let us choose $\epsilon_n>0$ with $\epsilon_n\rightarrow 0$ as $n\rightarrow\infty$ such that
\begin{align}\label{eq-2.9}
	(1-\epsilon_n)\left(1- \frac{|z_n+ \rho_n\xi|^2}{r^2_n}\right)t_n \leq\rho_n\leq (1+\epsilon_n)\left(1- \frac{|z_n+ \rho_n\xi|^2} {r^2_n}\right)t_n.
\end{align}
Therefore, from \eqref{eq-2.5}, \eqref{eq-2.7} and \eqref{eq-2.9}, we obtain
\begin{align}\label{eq-2.10}
	F^{\#}_n(\xi)\nonumber&\leq \frac{(1+\epsilon_n)^{1+\alpha} \left\{\left(1- \frac{|z_n+ \rho_n\xi|^2} {r^2_n}\right)t_n \right\}^{1+\alpha} \left(|f_z(z_n+\rho_n\xi)| + |f_{\overline{z}} (z_n+\rho_n\xi)| \right)}{1+(1-\epsilon_n) ^{2\alpha} \left\{\left(1- \frac{|z_n+ \rho_n\xi|^2}{r^2_n}\right) t_n\right\}^{1+\alpha} |f(z_n+ \rho_n\xi)|^2} \\&\nonumber\leq \frac{(1+\epsilon_n)^{1+\alpha}\left\{\left(1- \frac{|z_n+ \rho_n\xi|^2} {r^2_n}\right)t_n\right\}^{1+\alpha} \left(|f_z(z_n +\rho_n\xi)| + |f_{\overline{z}}(z_n+\rho_n\xi)| \right)}{(1- \epsilon_n)^{2\alpha} \left\{\left(1- \frac{|z_n+ \rho_n\xi|^2} {r^2_n}\right)t_n\right\}^{1+\alpha} |f(z_n+ \rho_n\xi)|^2} \\&\nonumber=\frac{(1+\epsilon_n)^{1+\alpha}}{(1-\epsilon_n)^{2\alpha}} \cdot H_n(t_n, z_n+\rho_n\xi) \cdot \frac{1+ \left\{\left(1- \frac{|z_n+ \rho_n\xi|^2}{r^2_n}\right)t_n\right\}^{1+\alpha} |f(z_n+ \rho_n\xi)|^2}{\left\{\left(1- \frac{|z_n+ \rho_n\xi|^2} {r^2_n}\right)t_n\right\}^{1+\alpha} |f(z_n+ \rho_n\xi)|^2} \\&\leq\frac{(1+\epsilon_n)^{1+\alpha}} {(1-\epsilon_n)^{2\alpha}} \cdot H_n(t_n, z_n+\rho_n\xi) \rightarrow 1 \;\;\;\;\mbox{as} \;\;\;\;n\rightarrow \infty.
\end{align}
Thus, by Marty's Theorem \ref{th-2.1}, we conclude that the sequence $\{F_n(\xi)\}$ is normal. We may assume that $\{F_n(\xi)\}$ converges locally uniformly in $\mathbb{C}$. Then, the limit function $\{F(\xi)\}$ is logharmonic in $\mathbb{C}$. In view of \eqref{eq-2.8}
and \eqref{eq-2.10} it follows that $F^{\#}(\xi)\leq F^{\#}(0)= 1\neq 0$, which implies that $F$ is a non-constant logharmonic mapping. \vspace{2mm}

Next, we prove the necessary part of the theorem. Let $f$ be normal in $\mathbb{D}$. Again, we recall that the functions $F_n(\xi)$ given by
\begin{align*}
	F_n(\xi)=\rho^{\alpha}_n f(z_n+\rho_n\xi)
\end{align*}
are defined for $|\xi|<\frac{1-|z_n|}{\rho_n}$. Again, we observe that
\begin{align*}
	\frac{\rho^{1+\alpha}_n}{1-|z_n|}\leq \frac{\rho^{1+\alpha}_n}{r_n -|z_n|}&= \frac{(r^2_n -|z_n|^2)^{1+\alpha}}{r^{2(1+\alpha)}_n(r_n -|z_n|)}t^{1+\alpha}_n \\&= \frac{(r_n +|z_n|)^{1+\alpha}(r_n -|z_n|)^{\alpha}}{r^{2(1+\alpha)}_n} t^{1+\alpha}_n \\&< \frac{r^{\alpha}_n(r_n +|z_n|)^{1+\alpha}}{r^{2(1+\alpha)}_n} t^{1+\alpha}_n \\&= \frac{(r_n +|z_n|)^{1+\alpha}}{r^{\alpha +2}_n} t^{1+\alpha}_n \\&< \frac{2^{1+\alpha}}{r_n} t^{1+\alpha}_n \rightarrow 0 \;\;\;\;\mbox{as} \;\;\;\;n\rightarrow \infty,
\end{align*}
which implies that
\begin{align*}
	\frac{\rho^{1+\alpha}_n}{1-|z_n|-\rho_n |\xi|}\rightarrow 0 \;\;\;\;\mbox{as} \;\;\;\;n\rightarrow \infty,\;\;\mbox{for}\;\; |\xi|<\frac{1-|z_n|}{\rho_n}.
\end{align*}
Since
\begin{align*}
	F^{\#}_n(\xi)&= \frac{\rho^{1+\alpha}_n\left(|f_z(z_n+\rho_n\xi)| + |f_{\overline{z}}(z_n+\rho_n\xi)|\right)}{1+\rho^{2\alpha}_n |f(z_n+\rho_n\xi)|^2} \\&\leq \frac{\rho^{1+\alpha}_n \left(|f_z(z_n+\rho_n\xi)| + |f_{\overline{z}}(z_n+ \rho_n\xi)| \right)}{1+ |f(z_n+\rho_n\xi)|^2} \\&= \rho^{1+ \alpha}_n f^{\#}(z_n+\rho_n\xi)\\&\leq \frac{\rho^{1+\alpha}_n} {1-|z_n|-\rho_n |\xi|} \left(1 -|z_n+\rho_n\xi|^2\right) f^{\#}(z_n+\rho_n\xi)
\end{align*}
and $f$ is normal, we have
\begin{align*}
	\left(1 -|z_n+\rho_n\xi|^2\right) f^{\#}(z_n+\rho_n\xi)<\infty.
\end{align*}
Therefore, it is easy to see that $F^{\#}_n(\xi)\rightarrow 0$ as $n\rightarrow\infty$ and thus, $F^{\#}(\xi)=0$ for all $\xi\in \mathbb{C}$, so that $F(\xi)$ is a constant, which is a contradiction. This completes the proof.
\end{proof}

Now, we extend the theorem of Lohwater and Pommerenke \cite[Theorem 1]{Lohwater-Pommerenke-AASF-1973}, a well-known tool in the theory of normal families, to the case of normal logharmonic mappings. If we choose $\alpha = 0$ in Theorem \ref{th-2.2}, we obtain a result corresponding to the Lohwater-Pommerenke theorem. However, the proof presented here follows a different approach from Theorem \ref{th-2.2}. Additionally, this result provides a necessary and sufficient condition for normality.
\begin{thm}
A non-constant function $f(z)=z|z|^{2\beta}h(z) \overline{g(z)}$ logharmonic mapping in $\mathbb{D}$ is normal if and only if there do not exist sequences of points $\{z_n\}\subset\mathbb{D}$ and of real numbers $\{\rho_n\}$ with $\rho_n>0$ and $\rho_n\rightarrow 0$ as $n\rightarrow\infty$ such that the functions 
\begin{align*}
	F_n(\xi):= f(z_n+\rho_n\xi)
\end{align*}
converges locally uniformly in $\mathbb{C}$ to a non-constant logharmonic mapping $F(\xi)$ satisfying $F^{\#}(\xi)\leq F^{\#}(0)= 1$.
\end{thm}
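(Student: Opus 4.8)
The plan is to adapt the classical Lohwater--Pommerenke argument, replacing the two-parameter family $H_n(t,z)$ used in the proof of Theorem~\ref{th-2.2} by the one-parameter ``distance-to-the-boundary times spherical derivative'' maximisation; this is the alternative approach alluded to above, and it has the feature that no parameter $\alpha$ and no auxiliary radii $r_n$ are needed. The equivalence splits into the two implications ``the renormalised limit exists $\Rightarrow$ $f$ is not normal'' and ``$f$ is not normal $\Rightarrow$ the renormalised limit exists''; I would dispose of the first (soft) one first.

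For that implication, suppose $F_n(\xi)=f(z_n+\rho_n\xi)\to F(\xi)$ locally uniformly on $\mathbb{C}$ with $F$ non-constant, $\rho_n>0$ and $\rho_n\to0$ (only non-constancy of $F$ is needed here, not the normalisation $F^{\#}(\xi)\le F^{\#}(0)=1$). The crucial observation is that, since $F_n$ is defined only where $z_n+\rho_n\xi\in\mathbb{D}$, locally uniform convergence on \emph{all} of $\mathbb{C}$ forces the admissible radii $R_n:=(1-|z_n|)/\rho_n$ to tend to $\infty$, so $\rho_n/(1-|z_n|)\to0$. Assume now, for contradiction, that $f$ is normal, with $C:=\sup_{z\in\mathbb{D}}(1-|z|^{2})f^{\#}(z)<\infty$. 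Fixing $\xi$ and taking $n$ large, $1-|z_n+\rho_n\xi|\ge(1-|z_n|)\bigl(1-|\xi|/R_n\bigr)\ge\tfrac12(1-|z_n|)$, so, using $F_n^{\#}(\xi)=\rho_n f^{\#}(z_n+\rho_n\xi)$ (the case $\alpha=0$ of \eqref{eq-2.7}), one obtains $F_n^{\#}(\xi)\le\rho_n C/(1-|z_n+\rho_n\xi|^{2})\le 2C/R_n\to0$. Since $F_n\to F$ locally uniformly forces $F_n^{\#}\to F^{\#}$ locally uniformly, $F^{\#}\equiv0$ on $\mathbb{C}$, i.e.\ $F$ is constant --- a contradiction; hence $f$ is not normal.

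For the converse, assume $f$ is not normal; then there is $\{w_n\}\subset\mathbb{D}$ with $|w_n|\to1$ and $(1-|w_n|^{2})f^{\#}(w_n)\to\infty$. For each $n$, the continuous function $z\mapsto\bigl(\tfrac12(1-|w_n|)-|z-w_n|\bigr)f^{\#}(z)$ on the closed disk $K_n:=\{z:|z-w_n|\le\tfrac12(1-|w_n|)\}\subset\mathbb{D}$ vanishes on $\partial K_n$, hence attains its maximum $M_n$ at an interior point $z_n$; writing $d_n:=\tfrac12(1-|w_n|)-|z_n-w_n|>0$ we have $M_n=d_nf^{\#}(z_n)\ge\tfrac12(1-|w_n|)f^{\#}(w_n)\ge\tfrac14(1-|w_n|^{2})f^{\#}(w_n)\to\infty$. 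Put $\rho_n:=1/f^{\#}(z_n)=d_n/M_n$; then $0<d_n\le\tfrac12(1-|w_n|)\to0$ and $M_n\to\infty$ give $\rho_n\to0$ and $R_n:=d_n/\rho_n=M_n\to\infty$. The function $F_n(\xi):=f(z_n+\rho_n\xi)$ is then defined for $|\xi|<R_n$, and for such $\xi$, since $|z_n+\rho_n\xi-w_n|\le|z_n-w_n|+\rho_n|\xi|$, the maximality of $M_n$ at $z_n$ gives $\bigl(d_n-\rho_n|\xi|\bigr)f^{\#}(z_n+\rho_n\xi)\le M_n$, whence $F_n^{\#}(\xi)=\rho_nf^{\#}(z_n+\rho_n\xi)\le d_n/(d_n-\rho_n|\xi|)=(1-|\xi|/R_n)^{-1}$, while $F_n^{\#}(0)=\rho_nf^{\#}(z_n)=1$. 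Thus $\{F_n^{\#}\}$ is uniformly bounded on compact subsets of $\mathbb{C}$, so by Theorem~\ref{th-2.1} the family $\{F_n\}$ is normal; passing to a subsequence, $F_n\to F$ locally uniformly on $\mathbb{C}$ with $F$ logharmonic, and since $F_n^{\#}\to F^{\#}$ locally uniformly we get $F^{\#}(0)=1\ne0$ (so $F$ is non-constant) and $F^{\#}(\xi)=\lim_nF_n^{\#}(\xi)\le\lim_n(1-|\xi|/R_n)^{-1}=1=F^{\#}(0)$, which is exactly the desired renormalised limit.

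The step I expect to be the main obstacle is the pair of closure statements used above: that a locally uniform limit of logharmonic mappings carrying a uniform local bound on $f^{\#}$ is again logharmonic, and that such convergence forces $F_n^{\#}\to F^{\#}$ locally uniformly. The first needs control of the second dilatations $\omega_n(\xi)=\omega(z_n+\rho_n\xi)$ --- a normal family of analytic self-maps of $\mathbb{D}$, with a subsequential analytic limit $\omega^{*}$ satisfying $|\omega^{*}|\le1$; one must exclude or absorb the degenerate possibility $|\omega^{*}|\equiv1$ and keep track of the zeros of the $F_n$ through the representations \eqref{eq-1.2}--\eqref{eq-1.3}, after which, $F$ solving \eqref{eq-1.1} with dilatation $\omega^{*}$, a Weierstrass-type argument on the underlying analytic data delivers the convergence of the partial derivatives and hence of $f^{\#}$. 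These facts are already invoked (without further comment) in the proof of Theorem~\ref{th-2.2}, so in the present setting they may be taken over; the remaining content of the proof is the elementary metric bookkeeping displayed above.
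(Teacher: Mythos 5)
Your proof is correct, but the rescaling you use is genuinely different from the paper's. For the implication ``$f$ not normal $\Rightarrow$ the rescaled limits exist'', the paper follows Lohwater--Pommerenke literally: it picks radii $r_n$ with $|z_n^*|<r_n<1$, maximises the weighted quantity $\bigl(1-|z|^2/r_n^2\bigr)f^{\#}(z)$ over the whole disk $|z|<r_n$ to get $z_n$ and $M_n\to\infty$, and sets $\rho_n=1/f^{\#}(z_n)$, obtaining the bound $F_n^{\#}(\xi)\le (r_n^2-|z_n|^2)/(r_n^2-|z_n+\rho_n\xi|^2)\to1$; you instead maximise the ``distance-to-boundary'' weight $\bigl(\tfrac12(1-|w_n|)-|z-w_n|\bigr)f^{\#}(z)$ over the small disk $K_n$ around the point of non-normality, which yields the cleaner estimate $F_n^{\#}(\xi)\le(1-|\xi|/R_n)^{-1}$ with $R_n=M_n\to\infty$ and dispenses with the auxiliary radii $r_n$ altogether --- this is the Zalcman-style local maximisation, and it buys a more localised construction ($z_n$ stays within $\tfrac12(1-|w_n|)$ of $w_n$) at no extra cost. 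For the converse direction your argument is actually tighter than the paper's: you derive $R_n=(1-|z_n|)/\rho_n\to\infty$ directly from the hypothesis of locally uniform convergence on all of $\mathbb{C}$, whereas the paper's necessity part tacitly recycles the estimate \eqref{eq-2.14}, which was established only for the specially constructed sequences and not for arbitrary ones; your bound $F_n^{\#}(\xi)\le 2C/R_n\to0$ then gives $F^{\#}\equiv0$ exactly as in the paper. The two closure facts you flag (that a locally uniform limit of logharmonic mappings with locally bounded $f^{\#}$ is logharmonic, and that $F_n^{\#}\to F^{\#}$ locally uniformly, needed to pass $F_n^{\#}(0)=1$ and the upper bounds to the limit) are used without comment in the paper's proofs of Theorems \ref{th-2.2} and 2.3 as well, so your proposal is at the same level of rigour as the published argument and introduces no new gap; only the passage to a subsequence should be mentioned explicitly, and it is harmless since the theorem only asserts the existence of some sequences.
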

\begin{proof}
Suppose that $f$ is not normal. Then there exists a sequence $\{z^*_n\}\subset\mathbb{D}$ with $z^*_n\rightarrow 1$ as $n\rightarrow\infty$ such that 
\begin{align*}
	\left(1-|z^*_n|^2\right)\frac{|f_z(z^*_n)|+|f_{\overline{z}}(z^*_n)|}{1+|f(z^*_n)|^2}\rightarrow\infty \;\;\;\; \mbox{as}\;\;\;\;n\rightarrow\infty.
\end{align*}
Let $\{r_n\}$ be a sequence such that $|z^*_n|<r_n<1$ and
\begin{align}\label{eq-2.11}
	\left(1-\frac{|z^*_n|^2}{r^2_n}\right)\frac{|f_z(z^*_n)|+|f_{\overline{z}}(z^*_n)|}{1+|f(z^*_n)|^2}\rightarrow\infty \;\;\;\; \mbox{as}\;\;\;\;n\rightarrow\infty.
\end{align}
Moreover, we choose $\{z_n\}$ such that
\begin{align}\label{eq-2.12}
	M_n:=\max_{|z|<r_n}\left(1-\frac{|z|^2}{r^2_n}\right)f^{\#}(z) =\left(1-\frac{|z_n|^2}{r^2_n}\right)f^{\#}(z_n),
\end{align}
the maximum exists because $f^{\#}(z)$ is continuous in $\{|z|\leq r_n\}$. Since $|z^*_n|< r_n$, it follows from \eqref{eq-2.11} and \eqref{eq-2.12} that $M_n\rightarrow\infty$ as $n\rightarrow\infty$. Now, we set 
\begin{align}\label{eq-2.13}
	\rho_n=\frac{1}{M_n} \left(1-\frac{|z_n|^2}{r^2_n}\right) =\frac{1}{f^{\#}(z_n)},
\end{align}
then it is easy to see that
\begin{align}\label{eq-2.14}
	\frac{\rho_n}{1-|z_n|}\leq \frac{\rho_n}{r_n-|z_n|}&= \frac{(r^2_n -|z_n|^2)}{M_n r^2_n (r_n -|z_n|)}\\&\nonumber=\frac{r_n +|z_n|}{r^2_n M_n}\leq \frac{2}{r_n M_n}\rightarrow 0 \;\;\;\;\mbox{as}\;\;\;\; n\rightarrow\infty.
\end{align}
Let the functions $F_n(\xi)=f(z_n+\rho_n\xi)$ is defined for $|\xi|<R_n=\frac{1-|z_n|}{\rho_n}$. From \eqref{eq-2.14} we also note that $R_n\rightarrow\infty$ as $n\rightarrow\infty$. Now, we have
\begin{align*}
	F^{\#}_n(\xi)= \frac{\rho_n\left(|f_z(z_n+\rho_n\xi)| + |f_{\overline{z}}(z_n+\rho_n\xi)|\right)}{1+ |f(z_n+\rho_n\xi)|^2}.
\end{align*}
In view of \eqref{eq-2.13} a simple computation shows that
\begin{align}\label{eq-2.15}
	F^{\#}_n(0)= \frac{\rho_n\left(|f_z(z_n)| + |f_{\overline{z}} (z_n)|\right)}{1+ |f(z_n)|^2}=\rho_n f^{\#}(z_n)=1.
\end{align}
Now, we apply Theorem \ref{th-2.1} to show that the sequence $\{F_n(\xi)\}$ is normal. If $|\xi|\leq R\leq R_n$, then by \eqref{eq-2.13}, we see that
\begin{align}\label{eq-2.16}
	F^{\#}_n(\xi)=\rho_n f^{\#}(z_n+\rho_n\xi)&\leq \frac{\rho_n M_n r^2_n}{r^2_n -|z_n+\rho_n\xi|^2} \\&\nonumber=\frac{r^2_n -|z_n|^2}{r^2_n -|z_n+\rho_n\xi|^2} \\&\nonumber\leq \frac{(r_n +|z_n|)}{(r_n +|z_n|-\rho_n R)}\cdot\frac{(r_n -|z_n|)}{(r_n -|z_n|-\rho_n R)}
\end{align} 
which tends to $1$ as $n\rightarrow\infty$ by \eqref{eq-2.14}, for each fixed $R$. Hence $\{F_n(\xi)\}$ is normal sequence. We may assume that $\{F_n(\xi)\}$ converges locally uniformly in $\mathbb{C}$. Then, the limit function $\{F(\xi)\}$ is logharmonic in $\mathbb{C}$. In view of \eqref{eq-2.15} and \eqref{eq-2.16} it follows that $F^{\#}(\xi)\leq F^{\#}(0)= 1\neq 0$, which implies that $F$ is a non-constant logharmonic mapping. \vspace{2mm}

Next, we prove the necessary part of the theorem. Let $f$ be normal in $\mathbb{D}$. Again, we recall that the functions $F_n(\xi)$ given by
\begin{align*}
	F_n(\xi)=f(z_n+\rho_n\xi)
\end{align*}
are defined for $|\xi|<\frac{1-|z_n|}{\rho_n}$, and by \eqref{eq-2.14} , we see that $\frac{\rho_n}{1-|z_n|}\rightarrow 0$ as $n\rightarrow \infty$. Again, we observe that
\begin{align*}
	\frac{\rho_n}{1-|z_n|-\rho_n |\xi|}\rightarrow 0 \;\;\;\;\mbox{as} \;\;\;\;n\rightarrow \infty,\;\;\mbox{for}\;\; |\xi|<\frac{1-|z_n|}{\rho_n}.
\end{align*}
Since
\begin{align*}
	F^{\#}_n(\xi)&=\rho_n \frac{\left(|f_z(z_n+\rho_n\xi)| + |f_{\overline{z}}(z_n+\rho_n\xi)|\right)}{1+ |f(z_n+\rho_n\xi)|^2} \\&\leq \frac{\rho_n} {1-|z_n|-\rho_n |\xi|}\cdot \left(1 -|z_n+\rho_n\xi|^2\right) \cdot\frac{\left(|f_z(z_n+\rho_n\xi)| + |f_{\overline{z}}(z_n+\rho_n\xi)|\right)}{1+ |f(z_n+\rho_n\xi)|^2}
\end{align*}
and $f$ is normal, we have
\begin{align*}
	\left(1 -|z_n+\rho_n\xi|^2\right) \frac{\left(|f_z(z_n+\rho_n\xi)| + |f_{\overline{z}}(z_n+\rho_n\xi)|\right)}{1+ |f(z_n+ \rho_n\xi)|^2}<\infty.
\end{align*}
Therefore, it is easy to see that $F^{\#}_n(\xi)\rightarrow 0$ as $n\rightarrow\infty$ and thus, $F^{\#}(\xi)=0$ for all $\xi\in \mathbb{C}$, so that $F(\xi)$ is a constant, which is a contradiction. This completes the proof. 
\end{proof}

The following result essentially demonstrates that the normality of a logharmonic mapping $f$ in $\mathbb{D}$ can be deduced even when the spherical derivative $f^{\#}(z)$ of logharmonic mapping $f$ is bounded away from zero. This aligns with the result of Grahl and Nevo \cite[Theorem 1]{Grahl-Nevo-JdMS-2012} for meromorphic functions. As an application of Theorem \ref{th-2.2}, we derive the following result:
\begin{thm}
A class $\mathcal{F}$ of logharmonic mapping $f(z)=z|z|^{2\beta}h(z) \overline{g(z)}$ in the unit disk $\mathbb{D}$ and and let $\epsilon> 0$ be a real number. If, for all $f\in\mathcal{F}$ and $z\in\mathbb{D}$,  $f^{\#}(z)>\epsilon$, then $f$ is a normal logharmonic mapping. 
\end{thm}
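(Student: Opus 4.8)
\emph{Proof proposal.} The plan is to argue by contradiction using Theorem~\ref{th-2.2}, but with a deliberately large rescaling exponent. Suppose some $f\in\mathcal F$ is not normal. The obvious first attempt — the exponent $\alpha=0$, i.e.\ the Lohwater–Pommerenke rescaling $F_n(\xi)=f(z_n+\rho_n\xi)$ — turns out to be useless here, since then $F_n^{\#}(\xi)=\rho_n\,f^{\#}(z_n+\rho_n\xi)\ge\rho_n\epsilon\to 0$, so the hypothesis $f^{\#}>\epsilon$ is completely absorbed by the vanishing factor $\rho_n$ and the limit only obeys $F^{\#}\ge 0$. The point I would exploit is that Theorem~\ref{th-2.2} is available for \emph{every} $\alpha\in(-1,\infty)$, and that when $\alpha>1$ the scaling amplifies rather than damps: in formula \eqref{eq-2.7} the decisive factor becomes $\rho_n^{\,1-\alpha}\to+\infty$.

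Concretely, I would fix $\alpha=2$ and run the construction in the proof of Theorem~\ref{th-2.2}: since $f$ is not normal, it produces points $z_n\in\mathbb D$ and numbers $\rho_n>0$ with $\rho_n\to 0^{+}$ such that $F_n(\xi):=\rho_n^{2}f(z_n+\rho_n\xi)$ converges locally uniformly on $\mathbb C$ to a non-constant logharmonic mapping $F$ with $F^{\#}(\xi)\le F^{\#}(0)=1$. Being non-constant, $F$ is not identically zero, so I may pick $\xi_0\in\mathbb C$ with $F(\xi_0)\ne 0$. Near $\xi_0$ the $F_n$ are eventually zero-free, so — by the standard fact (already used in the proof of Theorem~\ref{th-2.2}) that locally uniform convergence of zero-free logharmonic mappings passes to spherical derivatives — one has $F_n^{\#}(\xi_0)\to F^{\#}(\xi_0)\le 1$ and $|F_n(\xi_0)|^2\to|F(\xi_0)|^2>0$.

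The heart of the argument is then a single lower estimate at $\xi_0$. Writing $w_n:=z_n+\rho_n\xi_0$ and using $|f_z(w_n)|+|f_{\overline z}(w_n)|=f^{\#}(w_n)\bigl(1+|f(w_n)|^2\bigr)>\epsilon\bigl(1+|f(w_n)|^2\bigr)$ in \eqref{eq-2.7},
\[
F_n^{\#}(\xi_0)=\frac{\rho_n^{3}\bigl(|f_z(w_n)|+|f_{\overline z}(w_n)|\bigr)}{1+\rho_n^{4}|f(w_n)|^2}
>\frac{\epsilon\,\rho_n^{3}|f(w_n)|^2}{1+\rho_n^{4}|f(w_n)|^2}
=\frac{\epsilon\,t_n}{\rho_n\,(1+t_n)},
\]
where $t_n:=\rho_n^{4}|f(w_n)|^2=|F_n(\xi_0)|^2$. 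Since $t_n\to|F(\xi_0)|^2>0$ while $\rho_n\to 0^{+}$, the right-hand side tends to $+\infty$, which contradicts $F_n^{\#}(\xi_0)\to F^{\#}(\xi_0)\le 1$. Hence $f$ is normal, and the same reasoning applies to every $f\in\mathcal F$.

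I expect the only real obstacle to be realizing that $\alpha$ must be chosen strictly greater than $1$: for $\alpha\le 1$ the factor $\rho_n^{\,1-\alpha}$ does not blow up and the scheme collapses, whereas for $\alpha>1$ it forces the contradiction above. Everything else — formula \eqref{eq-2.7}, the non-vanishing point $\xi_0$, and the convergence of spherical derivatives — is routine once that choice is made.
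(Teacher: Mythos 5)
Your proposal is correct and takes essentially the same route as the paper: the paper likewise applies Theorem~\ref{th-2.2} with the rescaling $F_n(\xi)=\rho_n^{2}f(z_n+\rho_n\xi)$ (your choice $\alpha=2$), picks $\xi_0$ with $F(\xi_0)\neq 0$, and obtains the same lower bound $F_n^{\#}(\xi_0)>\dfrac{\epsilon}{\rho_n}\cdot\dfrac{|F_n(\xi_0)|^2}{1+|F_n(\xi_0)|^2}\rightarrow\infty$, contradicting $F^{\#}\leq 1$. Your remark that the exponent must exceed $1$ for the factor $\rho_n^{1-\alpha}$ to blow up is exactly why the paper's choice $\rho_n^{2}$ works, so there is no substantive difference between the two arguments.
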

\begin{proof}
Suppose that $f$ is a sense-preserving logharmonic mapping in $\mathbb{D}$ which is not normal. By Theorem \ref{th-2.2}, there exists a sequence $\{z_n\}\subset\mathbb{D}$ and a sequence of positive real numbers $\rho_n$ such that $z_n\rightarrow 1$  and $\rho_n\rightarrow 0$, and a non-constant sense-preserving logharmonic mapping $F$ in $\mathbb{C}$ such that
the sequence
\begin{align*}
	F_n(\xi)=\rho^2_n f(z_n+\rho_n\xi)
\end{align*}
converges locally uniformly to $F$ as $n\rightarrow\infty$, with $F^{\#}(\xi)\leq 1$ on $\mathbb{C}$ and $F^{\#}(0)= 1$. Then there exists $\xi_0\in\mathbb{C}$ such that $|F(\xi_0)|>0$ and so, for
sufficiently large $n$, $|F_n(\xi_0)|\neq 0$. Threfore, we observe that
\begin{align*}
	F^{\#}_n(\xi_0)&= \frac{\rho^3_n\left(|f_z(z_n+\rho_n\xi_0)| + |f_{\overline{z}}(z_n+\rho_n\xi_0)|\right)}{1+\rho^4_n |f(z_n+\rho_n\xi_0)|^2}\\&= \frac{\rho^3_n\left(|f_z(z_n+\rho_n\xi_0)| + |f_{\overline{z}}(z_n+\rho_n\xi_0)|\right)}{\rho^4_n |f(z_n+\rho_n\xi_0)|^2}\cdot\frac{\rho^4_n |f(z_n+\rho_n\xi_0)|^2}{1+\rho^4_n |f(z_n+\rho_n\xi_0)|^2} \\&\geq \frac{f^{\#}(z_n+\rho_n\xi_0)}{\rho_n} \cdot\frac{ |F_n(\xi_0)|^2}{1+|F_n(\xi_0)|^2}\\&> \frac{\epsilon}{\rho_n} \cdot\frac{ |F_n(\xi_0)|^2}{1+|F_n(\xi_0)|^2} \rightarrow\infty \;\;\;\;\mbox{as}\;\;\;\;n\rightarrow\infty.
\end{align*}
This is a contradiction to the fact that $F^{\#}(\xi)\leq 1$ for each $\xi\in\mathbb{C}$. Thus, $f$ is a normal logharmonic mapping. This completes the proof.
\end{proof}

\noindent{\bf Acknowledgment:} The authors are deeply grateful to the anonymous referee(s) for their detailed comments and valuable suggestions, which have significantly improved the presentation of the paper.

\section{\bf Declaration}
\noindent\textbf{Compliance of Ethical Standards:}\\

\noindent\textbf{Conflict of interest.} The authors declare that there is no conflict  of interest regarding the publication of this paper.\vspace{1.5mm}

\noindent\textbf{Data availability statement.}  Data sharing is not applicable to this article as no datasets were generated or analyzed during the current study.\vspace{1.5mm}

\noindent\textbf{Declaration of Fundings.} The authors declare that no funds, grants, or other support were received during the preparation of this manuscript.


\begin{thebibliography}{100}
	
	 \bibitem{Abdulhadi-Bshouty-TAMS-1988} {\sc Z. Abdulhadi} and {\sc D. Bshouty}, Univalent functions in $H\cdot\overline{H}(D)$, \textit{Trans. Amer. Math. Soc.}, \textbf{305} (1988), 841-849.
	 
	 \bibitem{Abdulhadi-IJM-1996} {\sc Z. Abdulhadi}, Close-to-starlike logharmonic mappings, \textit{Internat. J. Math. Sci.} \textbf{19} (1996), 563-574.
	 
	 \bibitem{Abdulhadi-IJM-2002} {\sc Z. Abdulhadi}, Typically real log-harmonic mappings, \textit{Internat. J. Math. Sci.} \textbf{31} (2002), 1-9.
	
	 \bibitem{Abdulhadi-Ali-AAA-2012} {\sc Z. Abdulhadi} and {\sc R. M.  Ali}, Univalent logharmonic mappings in the plane, \textit{Abstr. Appl. Anal.} \textbf{2012}, 721943 (2012).
	
	\bibitem{Ahamed-Mandal-MM-2023} {\sc M. B. Ahamed} and {\sc S. Mandal}, Certain properties of normal meromorphic and normal harmonic mappings, \textit{Monatshefte für Mathematik}, \textbf{200} (2023), 719–736.
	
	\bibitem{Ahlfors-1979} {\sc L. Ahlfors}, Complex Analysis (Third Edition), \textit{MacGraw-Hill}, New York (1979).
	
	\bibitem{Arbe-Hern-MM-2019} {\sc H. Arbel$ \acute{a} $ez}, {\sc R. Hern$ \acute{a} $ndez} and {\sc W. Sierra}, Normal harmonic mappings, \textit{Monatsh. Math.} \textbf{190} (2019), 425-439.
	
	\bibitem{Clunie-Anderson-Pommerenke-JRM-1974} {\sc J. Clunie, J. Anderson} and {\sc C. Pommerenke}, On Bloch functions and normal functions, \textit{J. Reine Angew. Math.} \textbf{270} (1974), 12-37.
	
	\bibitem{Colonna-IUMJ-1989} {\sc F. Colonna}, The Bloch constant of bounded harmonic mappings, \textit{Ind. Univ. Math. J.} \textbf{38}(4) (1989), 829-840.
	
	\bibitem{Deng-Ponn-Qiao-MM-2020} {\sc H. Deng, S.  Ponnusamy} and {\sc J. Qiao}, Properties of normal harmonic mappings, \textit{Monatsh. Math.} \textbf{193} (2020), 605-621.
	
	 \bibitem{Grahl-Nevo-JdMS-2012} {\sc J. Grahl} and {\sc S. Nevo}, Spherical derivatives and normal families, \textit{Journal d’Analyse Math\'ematique}, \textbf{117}(1) (2012), 119-128.	
	
	\bibitem{Hua-MM-1995} {\sc X. H. Hua}, A new approach to normality criterion, \textit{Manuscripta Math.}, \textbf{86} (1995), 467-478.
	
	\bibitem{Jiang-HJM-2020} {\sc Q. H. Jiang}, Normality of log-harmonic mappings, \textit{Houston J. Math.} \textbf{46}(3) (2020), 611-625.
	
	\bibitem{Lewy-BAMS-1936} {\sc H. Lewy}: On the non-vanishing of the Jacobian in certain one-to-one mappings, \textit{Bull. Amer. Math. Soc.} \textbf{42} (1936), 689-692.
	
	\bibitem{Lohwater-Pommerenke-AASF-1973} {\sc A. J. Lohwater} and {\sc Ch. Pommerenke}, On normal meromorphic functions, \textit{Ann. Acad. Sci. Fenn. Ser. A I Math.} \textbf{550}, 12 (1973).
	
	\bibitem{Meng-Ponnusamy-Qiao-JGA-2024} {\sc L. Meng, S. Ponnusamy} and {\sc J. Qiao}, Properties of Logharmonic Mappings, \textit{J. Geom. Anal.} \textbf{34}, 240 (2024).
	
	\bibitem{Noshiro-JFSHU-1938} {\sc K. Noshiro}, Contributions to the theory of meromorphic functions in the unit circle, \textit{J. Fac. Svi. Hokkaido Univ.} \textbf{7} (1938), 149-159.
	
	\bibitem{Pommerenke-MMJ-1974} {\sc C. Pommerenke}, On normal and automorphic functions, \textit{Michigan Math. J.} (1974), 193-202. 
	
	\bibitem{Yamashita-MZ-1975} {\sc S. Yamashita}, On normal meromorphic functions, \textit{Math. Z.} \textbf{141} (1975), 139–145.
	
	\bibitem{Pang-Zalcman-BLMS-2000} {\sc X. C. Pang} and {\sc L. Zalcman}, Normal families and shared values, \textit{Bull. Lond. Math. Soc.} \textbf{32}(3) (2000), 325-331.
	
	\bibitem{Yosida-PPMSJS-1934} {\sc K. Yosida}, On a class of meromorphic functions, \textit{Proc. Phys. Math. Soc. Jpn. Ser.} \textbf{16}(3)(1934), 227-235.
	
\end{thebibliography}
\end{document}